\newtheorem{theorem}{Theorem}[section]
\newtheorem{lemma}[theorem]{Lemma}
\theoremstyle{definition}
\newtheorem{proposition}[theorem]{Proposition}
\theoremstyle{remark}
\theoremstyle{plain} \numberwithin{equation}{section}
\begin{document}


\title{Some Studies On Central Derivation of Nilpotent Lie Superalgebras  } 



\author{Rudra Narayan Padhan}

\email{rudra.padhan6@gmail.com}
\affiliation{Department of Mathematics, National Institute of
Technology Rourkela, Odisha- 769008, India}
\author{K.C. Pati}
\email[Corresponding Author email: ]{kcpati@nitrkl.ac.in}
\affiliation{Department of Mathematics, National Institute of
Technology Rourkela, Odisha- 769008, India
}%
\begin{abstract}
Many theorems and formulas of Lie algebras run quite parallel to Lie superalgebra case, sometimes giving interesting results. So it is quite natural to extend the new concepts of Lie algebra immediately to Lie superalgebra case, as these type of algebras have wide applications in physics and related theories. Using the concept of isoclinism, F. Saeedi and S. Sheikh-Mohseni recently studied the central derivation of  nilpotent Lie algebra  with nilindex 2. The purpose of the present paper is to continue and extend the investigation to obtain some similar results for Lie superalgebras, as isoclinism in Lie superalgebra is being recently introduced.

\end{abstract}
\pacs{02.20.Sv, 02.20.Qs}
\maketitle 

\section{Introduction and preliminaries}
In this section we review some terminology on Lie superalgebra and recall notations used in the paper.
A $superaglebra$[8] is a $\mathbb{Z}_{2}$-graded algebra $A = A_{\overline{0}} \oplus A_{\overline{1}}$ 
(that is, if  $a\in A_{\alpha}$, $b \in A_{\beta}$, $\alpha,\beta \in \mathbb{Z}_{2}$, then $ ab\in A_{\alpha+\beta}$) . 
A $Lie~superalgebra$ is a superalgebra $G=G_{\overline{0}}\oplus G_{\overline{1}}$ with the operation [.,.] satisfying the following axiom:
 \begin{enumerate}
 \item $[a,b]=-(-1)^{deg(a)deg(b)}[b,a]$ $~~~~~~~$for $a\in G_{\alpha}$ and $b\in G_{\beta}$
 \item $[a,[b,c]]=[[a,b],c]+(-1)^{deg(a)deg(b)}[b,[a,c]]$  $~~~~~~~$for $a\in G_{\alpha}$ and $b\in G_{\beta}$
  \end{enumerate}
 Where $deg(a)$ is $0$ if $a\in G_{\overline{0}}$ and $deg(a)$ is $1$ if $a\in G_{\overline{1}}$. For a Lie superalgebra $G=G_{\overline{0}}\oplus G_{\overline{1}}$, the even part $G_{\overline{0}}$ is a Lie algebra and $G_{\overline{1}}$ is a $G_{\overline{0}}$-module. A $\mathbb{Z}_{2}$-graded vector subspace $F$ of $G$ is called a super subalgebra (respectively ideal) of $G$, if $[F,F]\subseteq F$ (respectively $[F,G]\subseteq F$ ). The ideal $Z(G)=\{x \in G~|~[g,x]=0, ~\forall~g \in G\}$ is called the center of the Lie superalgebra $G$.  The Lie superalgebra $G$ is abelian, if $[G,G]=0$. Since $[G_{\overline{0}},G_{\overline{0}}]\subseteq G_{\overline{0}}$ and $[G_{\overline{1}},G_{\overline{1}}]\subseteq G_{\overline{0}}$,   we can observe that, a Lie superalgebra without even part, i.e.,  $G_{\overline{0}}=0$, is an abelian Lie superalgebra.\\  
 
 By a $homomorphism$ between super vector spaces $T:G \rightarrow H$ of degree $\beta \in \mathbb{Z}_{2}$, we mean a linear map satisfying $T(G_{\alpha})\subseteq G_{\alpha+\beta}$, $\forall~\alpha \in \mathbb{Z}_{2} $. In particular, if the degree of $T$ is zero, then the homomorphism $T$ is called homogeneous linear map of even degree. A Lie superalgebra homomorphism $T:G \rightarrow H$ is a homogeneous linear map of even degree such that $T[g_{1},g_{2}]=[T(g_{1}),T(g_{2})], ~\forall~ g_{1},~g_{2}~ \in G$. The notation of $epimorphisms,~isomorphisms,~automorphisms$ have the obvious meaning. For superdimension of Lie superalgebra $G$, we simply write $dim(G)=(m|n)$, where $dim(G_{\overline{0}})=m$ and $dim(G_{\overline{1}})=n$. For Lie superalgebra $G$, $End(G)$ consists of set of all linear transformation from $G$  to $G$, which has a $\mathbb{Z}_{2}$-grading by defining;
 \[ End_{\alpha}(G)=\{T \in End(G)~|~T(G_{\beta})\subseteq G_{\alpha+\beta} \}~\forall~\alpha,~\beta \in \mathbb{Z}_{2}. \]
 Also, $End(G)=End_{\overline{0}}(G)\oplus End_{\overline{1}}(G)$ is a Lie superalgebra, where the Lie super bracket is define by;
 \[ [T_{1},T_{2}]=T_{1}\circ T_{2}-(-1)^{deg(T_{1})deg(T_{2})}T_{2} \circ T_{1},~ \forall ~ T_{1},~T_{2} \in End(G).\]
 
 A $derivation$ of degree $\alpha,~\alpha\in \mathbb{Z}_{2}$, of a Lie superalgebra $G$ is an endomorphism $T \in End_{\alpha}(G)$ with the property 
 \[T[g_{1},g_{2}]=[T(g_{1}),g_{2}]+(-1)^{deg(\alpha)deg(g_{1})}[g_{1},T(g_{2})],~\forall~g_{1} \in G_{\beta},~ g_{2} \in G,~\beta\in \mathbb{Z}_{2} .\]
 We denote $SDer_{\alpha}(G)\subseteq End_{\alpha}(G)$ the space of all derivation of degree $\alpha $, and we set $SDer(G)= SDer_{\overline{0}}(G)\oplus SDer_{\overline{1}}(G)$. $SDer(G)$ is a Lie subalgebra of $End(G)$. The map $ad_{g}:G\rightarrow G$ defined by $g \mapsto [g,x]$ is a derivation called the $inner ~derivation$ of $G$.
 The space $SIDer(G)=\{ad_{g}~|~g \in G\}$ of inner derivation is an ideal of $SDer(G)$, because $[T,ad_{g}]=ad_{T(g)},~\forall~T \in SDer(G)$. A derivation of a Lie superalgebra $G$ is called a $central~ derivation$ if its image is contained in the center of $G$. The set of all central derivation is denoted by $SDer_{z}(G)$ is a subalgebra of $SDer(G)$. A Lie superalgebra $G$ is a $stem$ Lie superalgebra, if $Z(G)\subseteq G^{1}.$\\
 
 The $lower~central~series $ of a Lie superalgebra $G$ is defined as follows:
  \[G=G^{0}\supseteq G^{1}\supseteq\cdots \supseteq G^{n}\supseteq\cdots ,\]
  where $G^{1}$ is the derived algebra of $G$ and $G^{n}=[G^{n-1},G]$. A Lie superalgebra $G$ is nilpotent if $\exists$ a positive integer $n$ such that $G^{n}={0}$. The smallest integer $n$ for which $G^{n}=0$ and $G^{n-1}\neq 0$ is called the $nilindex$ of $G$.\\

 \textbf{\textit{Observation:}} If $T$ is a central derivation, then $T(G^{1})=0$ and $[T,ad_{g}](x)=ad_{T(g)}(x)=[T(g),x]=0,~\forall~ad_{g}\in SIDer(G),~x \in G$. Hence a central derivation of Lie superalgebra commute with every inner derivation.\\
 
 Let $G$  be a Lie superalgebra and $H$ be an abelian Lie superalgebra. $Hom(G,H)$ be the set of all linear transformation from $G$ to $H$. $Hom(G,H)$ has a $\mathbb{Z}_{2}$-grading by defining;
 \[ Hom(G,H)_{\alpha}=\{T \in Hom(G,H)~|~T(G_{\beta})\subseteq H_{\alpha+\beta} \}~\forall~\alpha,~\beta \in \mathbb{Z}_{2}. \]
 Also, $Hom(G,H)=Hom(G,H)_{\overline{0}}\oplus Hom(G,H)_{\overline{1}}$ is a Lie superalgebra, where the Lie super bracket is defined by the Lie super bracket of $H$ ;
 \[[T_{1},T_{2}](g)=[T_{1}(g),T_{2}(g)]_{H}=0\]
 $ \forall~T_{1} \in Hom(G,H)_{\alpha} ,~T_{2} \in Hom(G,H)_{\beta},~g \in G_{\gamma}$
 and $[,]_{H}$ denoted Lie super bracket on $H$.\\

  Two Lie superalgebra $G$ and $H$ are $isoclinic$, if there exists a pair of isomorphisms $\varphi:G/Z(G)\longrightarrow H/Z(H)$ and $\theta:G^{1}\longrightarrow H^{1}$ such that the following diagram is commutative.\\
  \begin{center}
  \begin{tikzpicture}[>=latex]
\node (x) at (0,0) {\(G/Z(G)\times G/Z(G) \)};
\node (z) at (0,-3) {\(H/Z(H)\times H/Z(H)\)};
\node (y) at (3,0) {\(G^{1}\)};
\node (w) at (3,-3) {\(H^{1}\)};
\draw[->] (x) -- (y) node[midway,above] {$\mu$};
\draw[->] (x) -- (z) node[midway,left] {$\varphi^{2}$};
\draw[->] (z) -- (w) node[midway,below] {$\rho$};
\draw[->] (y) -- (w) node[midway,right] {$\theta$};
\end{tikzpicture}\\
 \end{center} 
  
  Where $\mu$ and $\rho$ are defined by $(\overline{x},\overline{y})\mapsto [x,y]$. In this case we say that $G$ is isoclinic to $H$ and write $G\sim H$, is a equivalence relation. The pair $(\varphi,\theta)$ is called an $isoclinism$ between $G$ and $H$. It is introduced by Saudamini [7] for Lie superalgebra case. 
  
 Moneyhun [2] proved that each isoclinism class of Lie algebras, with respect to above equivalence relation, contains a $stem~algebra$. Similar results for Lie superalgebra case has proved by Saudamini [7].\\
 
 The $Frattini~subalgebra$ of a Lie algebra is the intersection of all maximal subalgebras of the Lie algebra, which was introduced by Marshall [1]. Then later on $Frattini~subalgebra$ of a Lie superalgebra is studied by Liangyun Chen and Daoji Meng [6].  The $Frattini~subalgebra$ of a Lie superalgebra $G$, denoted by $F(G)$, is the intersection of all maximal subalgebras of the Lie superalgebra.
\section{The Main Results}
\begin{proposition}
If $G$ is a stem Lie superalgebra, then $SDer_{z}(G)$ is an abelian Lie superalgebra.
\end{proposition}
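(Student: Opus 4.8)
The plan is to prove that the super-bracket of any two central derivations vanishes identically; since it has already been recorded in the excerpt that $SDer_{z}(G)$ is a subalgebra of $SDer(G)$, showing that every bracket is zero is exactly what is needed to conclude that $SDer_{z}(G)$ is abelian. The single structural input I would exploit is the stem hypothesis $Z(G)\subseteq G^{1}$, combined with the Observation already noted above, namely that every central derivation annihilates the derived algebra: $T(G^{1})=0$ whenever $T\in SDer_{z}(G)$.

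First I would reduce to homogeneous central derivations $T_{1},T_{2}\in SDer_{z}(G)$ of degrees $\alpha$ and $\beta$ respectively, since the bracket on $End(G)$ is bilinear and $SDer_{z}(G)$ inherits the $\mathbb{Z}_{2}$-grading (the center $Z(G)$ being a graded ideal). It then suffices to evaluate $[T_{1},T_{2}]=T_{1}\circ T_{2}-(-1)^{deg(T_{1})deg(T_{2})}\,T_{2}\circ T_{1}$ on an arbitrary $g\in G$. Here is the key step: for any $g$, the element $T_{2}(g)$ lies in $Z(G)$ because $T_{2}$ is central, and the stem hypothesis gives $Z(G)\subseteq G^{1}$, so in fact $T_{2}(g)\in G^{1}$. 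Applying the Observation to $T_{1}$ then yields $T_{1}\bigl(T_{2}(g)\bigr)=0$ for all $g$, whence $T_{1}\circ T_{2}=0$ as an endomorphism of $G$; by the symmetric argument $T_{2}\circ T_{1}=0$ as well.

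Consequently $[T_{1},T_{2}]=0$ for all homogeneous $T_{1},T_{2}$, and by bilinearity the bracket vanishes on all of $SDer_{z}(G)$, so $SDer_{z}(G)$ is abelian. I do not expect a genuine obstacle here; the only point that must be handled with care is the vanishing of the composite of two central derivations, and this is precisely where the stem condition is indispensable. Without $Z(G)\subseteq G^{1}$ one could only conclude $T_{2}(g)\in Z(G)$, and a central derivation is guaranteed to annihilate $G^{1}$ but not the whole center, so the composites $T_{1}\circ T_{2}$ and $T_{2}\circ T_{1}$ would not collapse. Carrying the sign $(-1)^{deg(T_{1})deg(T_{2})}$ through the computation is then purely bookkeeping once both composites are shown to be zero.
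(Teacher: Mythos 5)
Your proposal is correct and follows essentially the same route as the paper: both arguments evaluate $[T_{1},T_{2}]$ on homogeneous central derivations and kill each composite by noting that $T_{2}(g)\in Z(G)\subseteq G^{1}$ (the stem hypothesis) and that central derivations annihilate $G^{1}$ (the Observation). The paper compresses this into a single line ``by the observation,'' whereas you spell out the intermediate step explicitly, but the underlying idea is identical.
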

\begin{proof}
Let $T_{1} \in (SDer_{z}(G))_{\alpha},~T_{2}\in (SDer_{z}(G))_{\beta}$. Then $[T_{1},T_{2}](g)=T_{1}\circ T_{2}(g)-(-1)^{deg(T_{1})deg(T_{2})}T_{2} \circ T_{1}(g)=0,~ \forall ~g \in G$ by the observation. Hence $SDer_{z}(G)$ is abelian.
\end{proof}

\begin{proposition}
Let $G$ be a non-abelian nilpotent Lie superalgebra of finite dimension. Then $SDer_{z}(G)$ is abelian if and only if $G$ is a stem Lie superalgebra.
\end{proposition}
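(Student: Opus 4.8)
The plan is as follows. The statement is an equivalence, and one implication is already available: the preceding Proposition shows that $G$ stem $\Rightarrow SDer_{z}(G)$ abelian, and its proof uses neither non-abelianness nor nilpotency. So the plan is to establish the reverse implication, that $SDer_{z}(G)$ abelian $\Rightarrow G$ stem, and I would do this by contraposition: assuming $Z(G)\not\subseteq G^{1}$, I would construct two central derivations whose super bracket is nonzero.

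First I would record the clean description of central derivations. By the Observation every central derivation annihilates $G^{1}$; conversely, any homogeneous linear map $T:G\to Z(G)$ with $T(G^{1})=0$ is automatically a central derivation, since for homogeneous $g_{1},g_{2}$ the term $T[g_{1},g_{2}]$ vanishes ($[g_{1},g_{2}]\in G^{1}$) while $[T(g_{1}),g_{2}]$ and $[g_{1},T(g_{2})]$ vanish ($T(g_{1}),T(g_{2})\in Z(G)$), so the derivation identity collapses to $0=0$. Thus $SDer_{z}(G)\cong Hom(G/G^{1},Z(G))$ as graded spaces, and a central derivation is prescribed freely by assigning, on a homogeneous complement $V$ of $G^{1}$ in $G$, arbitrary images in $Z(G)$ of matching degree.

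Next I would produce the two structural elements the construction needs. Since $G$ is non-abelian nilpotent, its nilindex is $n\ge 2$ and the last nonzero term satisfies $[G^{n-1},G]=G^{n}=0$ with $G^{n-1}\subseteq G^{1}$; hence $0\ne G^{n-1}\subseteq Z(G)\cap G^{1}$ and I may pick a nonzero homogeneous $u\in Z(G)\cap G^{1}$. Because $G$ is not stem and both $Z(G)$ and $G^{1}$ are graded, there is a homogeneous $z\in Z(G)\setminus G^{1}$; as $z\notin G^{1}$ I may choose the complement $V$ so that $z\in V$.

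Finally I would define two homogeneous central derivations via their action on $V$ (and zero on $G^{1}$): let $T_{2}$ be even with $T_{2}(z)=z$ and $T_{2}=0$ on the other basis vectors of $V$, and let $T_{1}$ have degree $\deg u+\deg z$ with $T_{1}(z)=u$ and $T_{1}=0$ on the other basis vectors of $V$; both are central derivations by the description above. Since $\deg T_{2}=\overline{0}$ the super bracket is the ordinary commutator, and
\[
[T_{1},T_{2}](z)=T_{1}(T_{2}(z))-T_{2}(T_{1}(z))=T_{1}(z)-T_{2}(u)=u-0=u\ne 0,
\]
where $T_{2}(u)=0$ because $u\in G^{1}$. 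This makes $SDer_{z}(G)$ non-abelian, completing the contrapositive. I expect the main obstacle to be precisely this construction: one must be certain the prescribed maps really are homogeneous central derivations — which the $Hom(G/G^{1},Z(G))$ description secures — and must keep the $\mathbb{Z}_{2}$-grading and the sign of the super bracket under control, which is why taking $T_{2}$ even (so the sign is $+1$) and steering $T_{1}(z)$ into $Z(G)\cap G^{1}$ (so the second term dies) is the cleanest route.
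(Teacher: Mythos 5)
Your proof is correct and follows essentially the same route as the paper: both argue the nontrivial direction by assuming $Z(G)\not\subseteq G^{1}$, use nilpotency to obtain a nonzero element of $Z(G)\cap G^{1}$, and build two central derivations supported on a central element outside $G^{1}$ (one fixing it, one sending it into $Z(G)\cap G^{1}$) whose bracket is nonzero. Your version is in fact somewhat more careful than the paper's, which merely asserts ``it is easy to check'' that the constructed maps are central derivations, whereas you justify this via the identification of $SDer_{z}(G)$ with homogeneous maps $G/G^{1}\to Z(G)$ and you control the sign in the super bracket explicitly.
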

\begin{proof}
Let $Sder_{z}(G)$ be an abelian Lie superalgebra. As $G$ is nilpotent and $0\neq G^{1}$ is an ideal of $G$, than $G^{1} \cap Z(G)\neq 0$. Suppose $\exists,~y \in Z(G)$ such that $y\notin G^{1}$ and $0\neq x\in G^{1} \cap Z(G)$. Now we can construct two central derivation $T_{1},~T_{2}\in SDer_{z}(G)$ such that $[T_{1},T_{2}]\neq0$.\\  
 
Define $T_{1}:G \rightarrow G$ and $T_{2}:G \rightarrow G$ by
\[T_{1}(z):= \begin{cases} 
      y & if~z= y \\
      0 & otherwise
   \end{cases}
\] 
and \[T_{2}(z):= \begin{cases} 
      x & if~z= y \\
      0 & otherwise 
   \end{cases}
\]  
respectively. It is easy to check $T_{1},~T_{2}\in SDer_{z}(G)$. Now $[T_{1},T_{2}](y)=T_{1} \circ T_{2}(y)-(-1)^{deg(T_{1})deg(T_{2})}T_{2} \circ T_{1}(y)=T_{1}(x)-(-1)^{deg(T_{1})deg(T_{2})}T_{2}(y) \neq 0$. Hence we get a contradiction to our assumption. Conversely, suppose $G$ is a stem algebra, then by proposition 2.1, $SDer_{z}(G)$ is abelian.
\end{proof}

\begin{lemma}
Let $(\varphi,\theta)$ be an isoclinism between the Lie superalgebras $G$ and $H$. If $G$ is a stem Lie superalgebra, then $\theta$ maps $Z(G)$ onto $ H^{1} \cap Z(H)$.
\end{lemma}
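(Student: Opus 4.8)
The plan is to reduce the ``onto'' statement to a pointwise equivalence, namely that for every $v\in G^{1}$ one has $\theta(v)\in Z(H)$ if and only if $v\in Z(G)$. Granting this, the stem hypothesis $Z(G)\subseteq G^{1}$ guarantees that $\theta$ is actually defined on all of $Z(G)$, so $\theta(Z(G))\subseteq\theta(G^{1})=H^{1}$; the forward implication of the equivalence then places $\theta(Z(G))$ inside $Z(H)$, giving $\theta(Z(G))\subseteq H^{1}\cap Z(H)$. For the reverse inclusion I would take $w\in H^{1}\cap Z(H)$, write $w=\theta(v)$ with $v\in G^{1}$ using surjectivity of $\theta\colon G^{1}\to H^{1}$, and invoke the backward implication to get $v\in Z(G)$, whence $w\in\theta(Z(G))$. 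Thus everything hinges on the pointwise equivalence.

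The heart of the argument, and the step I expect to be the main obstacle, is a compatibility relation between the two components of the isoclinism: for every $w\in G^{1}$ the class of $\theta(w)$ in $H/Z(H)$ should equal $\varphi$ of the class of $w$ in $G/Z(G)$, i.e. $\overline{\theta(w)}=\varphi(\overline{w})$, where overline denotes the canonical projections. I would prove this first for a single bracket $w=[a,b]$. Commutativity of the isoclinism diagram, $\theta\circ\mu=\rho\circ\varphi^{2}$, gives $\theta([a,b])=[a',b']$, where $a',b'\in H$ are any representatives of $\varphi(\overline{a}),\varphi(\overline{b})$. On the other hand, reading $\overline{w}=[\,\overline{a},\overline{b}\,]$ inside the quotient superalgebra $G/Z(G)$ and using that $\varphi$ is a Lie superalgebra isomorphism yields $\varphi(\overline{w})=[\varphi(\overline{a}),\varphi(\overline{b})]=\overline{[a',b']}=\overline{\theta(w)}$. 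Extending by linearity over all of $G^{1}$ then establishes the relation in general; the only care needed is that the representatives $a',b'$ be irrelevant, which is automatic since both sides are independent of the choice modulo $Z(H)$.

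With this relation in hand, the pointwise equivalence is immediate: for $v\in G^{1}$,
\[
\theta(v)\in Z(H)\iff \overline{\theta(v)}=0 \iff \varphi(\overline{v})=0 \iff \overline{v}=0 \iff v\in Z(G),
\]
where the middle step uses injectivity of $\varphi$. Combining this with the stem hypothesis and the bijectivity of $\theta$ exactly as in the first paragraph gives $\theta(Z(G))=H^{1}\cap Z(H)$, as claimed. I do not expect the bookkeeping in the first and third paragraphs to cause trouble; the whole weight of the proof sits in verifying the compatibility $\overline{\theta(w)}=\varphi(\overline{w})$, that is, in correctly translating the commutativity of the defining diagram into a statement comparing $\theta$ with $\varphi$ at the level of cosets.
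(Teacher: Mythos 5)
Your proposal is correct, and its core computation is the same as the paper's: both arguments use the commutativity $\theta\circ\mu=\rho\circ\varphi^{2}$ on a decomposition of an element of $G^{1}$ into brackets to identify the coset of $\theta(w)$ in $H/Z(H)$ with $\varphi(\overline{w})$. The difference is in how much you extract from that identity. The paper proves the compatibility only for $g\in Z(G)$ and concludes $\theta(g)\in Z(H)$, then dismisses surjectivity with the remark that ``$\theta$ is an isomorphism, so is onto'' --- which only gives that $\theta$ maps $G^{1}$ onto $H^{1}$, not that $\theta(Z(G))$ exhausts $H^{1}\cap Z(H)$. You instead establish $\overline{\theta(w)}=\varphi(\overline{w})$ for \emph{all} $w\in G^{1}$ and convert it, via injectivity of $\varphi$, into the pointwise equivalence $\theta(v)\in Z(H)\iff v\in Z(G)$, from which both inclusions follow cleanly. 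So your write-up is not merely a restyling: it supplies the reverse inclusion that the paper's proof actually leaves unjustified. The only point to keep an eye on is the one you already flagged --- independence of the choice of representatives $a',b'$ --- and your handling of it is fine since both sides of the identity live in $H/Z(H)$.
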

 \begin{proof}
 Let $g\in (Z(G))_{\gamma}\subseteq (G^{1})_{\gamma}$, then $g$ can be written as $g=\sum_{i=1}^{n} \alpha_{i}[x_{i},y_{i}]$, where the $\alpha_{i}$ are scalars and $x_{i} \in G_{\alpha},~y_{i} \in G_{\beta}$ such that $\alpha+\beta=\gamma$. Let $\varphi(x_{i}+Z(G))=h_{i}+Z(H)$ and $\varphi(y_{i}+Z(G))=k_{i}+Z(H)$, where $h_{i} \in H_{\alpha},~k_{i} \in H_{\beta}$. As $\theta$ is an isomorphism, so $\theta(g)\in (H^{1})_{\gamma}$. We want to show $\theta(g)\in (Z(H))_{\gamma}$. Consider,
 \begin{equation}
\begin{split}
\theta(g)+ Z(H)&=\theta(\sum_{i=1}^{n} \alpha_{i}[x_{i},y_{i}])+Z(H)\\
&=\sum_{i=1}^{n} \alpha_{i}.\theta([x_{i},y_{i}])+Z(H)\\
&=\sum_{i=1}^{n} \alpha_{i}.\theta\circ\mu(x_{i}+Z(G),y_{i}+Z(G))+Z(H)\\
&=\sum_{i=1}^{n} \alpha_{i}.\rho\circ\varphi^{2}(x_{i}+Z(G),y_{i}+Z(G))+Z(H)\\
&=\sum_{i=1}^{n} \alpha_{i}.\rho(h_{i}+Z(H),k_{i}+Z(H))+Z(H)\\
&=\sum_{i=1}^{n} \alpha_{i}.[h_{i},k_{i}]+Z(H)\\
&=\varphi(\sum_{i=1}^{n} \alpha_{i}.[x_{i},y_{i}]+Z(G))= Z(H).
\end{split}
\end{equation}
  Hence $\theta(g)\in (Z(H))_{\gamma}$, and $\theta$ is an isomorphism, so is onto\\
 
 \end{proof}
 
 \begin{proposition}
 Let $G$ and $H$ be two isoclinic Lie superalgebras and $G$ be a stem Lie superalgebra. Then every $T \in SDer_{z}(G)$ induces a central derivation $T^{*}\in SDer_{z}(H)$.  Moreover, the map $T\rightarrow T^{*}$ is a monomorphism from $SDer_{z}(G)$ into $SDer_{z}(H)$.
 \end{proposition}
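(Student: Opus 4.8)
The plan is to build $T^{*}$ by transporting $T$ across the isoclinism: use $\varphi$ to pass from $H$ back to $G$ at the level of central quotients, apply $T$ there, and use $\theta$ to carry the resulting central value forward into $H$. First I would record the consequence of the stem hypothesis. Since $G$ is stem, $Z(G)\subseteq G^{1}$, and by the Observation every $T\in SDer_{z}(G)$ satisfies $T(G^{1})=0$; hence $T(Z(G))=0$, so $T$ descends to a well-defined linear map on $G/Z(G)$. Given $h\in H$, choose $g\in G$ with $\varphi(g+Z(G))=h+Z(H)$ (possible since $\varphi$ is onto) and set $T^{*}(h):=\theta(T(g))$. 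The value $T(g)$ is independent both of the representative $g$ and of the choice of preimage, because $T(Z(G))=0$; thus $T^{*}$ is a well-defined linear map, and its degree equals that of $T$ because $\varphi$ and $\theta$ are even isomorphisms.

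Next I would verify that $T^{*}\in SDer_{z}(H)$. Centrality is immediate from Lemma 2.3: since $T(g)\in Z(G)$, we get $T^{*}(h)=\theta(T(g))\in\theta(Z(G))=H^{1}\cap Z(H)\subseteq Z(H)$. To see that $T^{*}$ is a derivation it then suffices, exactly as in the Observation, to show $T^{*}(H^{1})=0$, because for a map with image in $Z(H)$ the derivation identity collapses to $0=0+0$. For a generator $[h_{1},h_{2}]$ of $H^{1}$ I would exploit that $\varphi$, hence $\varphi^{-1}$, is a Lie superalgebra isomorphism: writing $\varphi^{-1}(h_{i}+Z(H))=p_{i}+Z(G)$, one has $\varphi^{-1}([h_{1},h_{2}]+Z(H))=[p_{1},p_{2}]+Z(G)$, so one may take $g=[p_{1},p_{2}]\in G^{1}$ and conclude $T^{*}([h_{1},h_{2}])=\theta(T([p_{1},p_{2}]))=\theta(0)=0$. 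By linearity $T^{*}(H^{1})=0$, so $T^{*}$ is a central derivation of $H$.

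Finally I would check that $\Psi:T\mapsto T^{*}$ is a monomorphism. Linearity and degree-preservation are clear from the construction. For the bracket, $SDer_{z}(G)$ is abelian by Proposition 2.1, so $\Psi([T_{1},T_{2}])=0$; on the target side each $T_{i}^{*}$ has image in $H^{1}\cap Z(H)\subseteq H^{1}$ and vanishes on $H^{1}$, whence $T_{1}^{*}\circ T_{2}^{*}=0=T_{2}^{*}\circ T_{1}^{*}$ and so $[T_{1}^{*},T_{2}^{*}]=0$, showing $\Psi$ is a homomorphism. Injectivity follows from the fact that $\varphi$ is onto and $\theta$ is injective: if $T^{*}=0$ then $\theta(T(g))=0$ for every $g\in G$, forcing $T(g)=0$ for all $g$, i.e. $T=0$.

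I expect the main obstacle to be the derivation property, specifically the identity $T^{*}(H^{1})=0$. This is the only place where one must genuinely use that $\varphi$ intertwines the brackets of the two central quotients (via the commuting isoclinism square $\theta\circ\mu=\rho\circ\varphi^{2}$) rather than being merely a linear isomorphism, and it is also where the stem hypothesis on $G$ re-enters through $T(G^{1})=0$. A secondary point that must be tracked carefully is the bookkeeping of $\mathbb{Z}_{2}$-degrees under $\varphi$, $\theta$ and $T$, needed to place $T^{*}$ in the correct graded component of $SDer_{z}(H)$.
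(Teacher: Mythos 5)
Your proposal is correct and follows essentially the same route as the paper: define $T^{*}(h)=\theta(T(g))$ for $\varphi(g+Z(G))=h+Z(H)$, use Lemma 2.3 for centrality, and note that both sides of the bracket identity vanish. In fact your writeup is more complete than the paper's at the two points it glosses over --- you justify well-definedness via $T(Z(G))\subseteq T(G^{1})=0$ (where the paper only says ``by remarks(?)''), and you actually verify the derivation property by proving $T^{*}(H^{1})=0$ from the commuting isoclinism square.
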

 \begin{proof}
 Let $(\varphi,\theta)$ be an isoclinism between the Lie superalgebras $G$ and $H$. Given any $h \in H_{\alpha}$, $\exists$ an element $g \in G_{\alpha}$ such that $\varphi(g+Z(G))=h+Z(H)$. By lemma 2.3, $\theta(T(g)) \in (H^{1} \cap Z(H))_{\alpha+\beta}$ if $T \in (SDer_{z}(G))_{\beta}$. Define $T^{*}:H\rightarrow H$ by $T^{*}(h)=\theta(T(g))$. Since $\theta~ and~ T$ are well-defined and by remarks(?), we can see that $T^{*}$ is well-defined. Also, if $T \in (SDer_{z}(G))_{\beta}$, then $T^{*} \in (SDer_{z}(H))_{\beta}$. Defined $\pi:SDer_{z}(G)\rightarrow SDer_{z}(H)$ by $\pi(T)=T^{*}$. Suppose $T_{1}=T_{2}\Leftrightarrow T_{1}(g)= T_{2}(g)~\forall~g \in~G_{\alpha}  \Leftrightarrow \theta(T_{1}(g))= \theta(T_{2}(g))\Leftrightarrow T^{*}_{1}(h)=T^{*}_{2}(h)~\forall ~ h \in ~H_{\alpha}\Leftrightarrow \pi(T_{1})=\pi(T_{2})$. Hence $\pi$ is a well-defined, injective and homogeneous linear map of even degree. For  $T_{1} \in (SDer_{z}(G))_{\beta}$, $T_{2} \in (SDer_{z}(G))_{\gamma}$  and by the observation, we have 
 \[ \pi([T_{1},T_{2}])((h))=[T_{1},T_{2}]^{*}(h)=\theta([T_{1},T_{2}](g))=\theta(T_{1} \circ T_{2}(g)-(-1)^{deg(T_{1})deg(T_{2})}T_{2} \circ T_{1}(g))=0.\]
 On the other hand,
 \[[\pi(T_{1}),\pi(T_{2})](h)=[T^{*}_{1},T^{*}_{2}](h)=T^{*}_{1}\circ T^{*}_{2}(h)-(-1)^{deg(T^{*}_{1})deg(T{*}_{2})}T^{*}_{2} \circ T^{*}_{1}(h)=0.\]
 Since $T^{*}_{1}(h)  \in (H^{2} \cap Z(H))_{\alpha+\beta},~T^{*}_{2}(h) \in (H^{1} \cap Z(H))_{\alpha+\gamma}$. Therefore $\pi([T_{1},T_{2}])=[\pi(T_{1}),\pi(T_{2})]$.
 
 \end{proof}
 
 \begin{proposition}
  Let $G$ be a Lie superalgebra, then $SDer_{z}(G)\cong T(G/G^{1},Z(G))$ as super vector space and it is a Lie super isomorphism, if $G$ is a stem Lie superalgebra.
 \end{proposition}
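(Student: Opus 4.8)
The plan is to exhibit an explicit grade-preserving linear bijection between $SDer_{z}(G)$ and $T(G/G^{1},Z(G))$ --- the latter understood as the $Hom$-space of the preliminaries with abelian target $Z(G)$ --- and then, in the stem case, to observe that both sides carry the zero Lie super bracket. First I would construct the forward map $\Psi$. Given a central derivation $T \in (SDer_{z}(G))_{\beta}$, the Observation recorded in the preliminaries gives $T(G^{1})=0$, so $T$ annihilates the derived algebra and factors through the abelianization $G/G^{1}$; moreover $T(G)\subseteq Z(G)$ by the definition of a central derivation. I therefore set $\Psi(T)=\overline{T}$, where $\overline{T}:G/G^{1}\to Z(G)$ is the induced map $\overline{T}(g+G^{1})=T(g)$. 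Since $T(G_{\gamma})\subseteq Z(G)_{\gamma+\beta}$, the map $\overline{T}$ lies in $T(G/G^{1},Z(G))_{\beta}$, so $\Psi$ respects the $\mathbb{Z}_{2}$-grading.

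Next I would build the inverse. For $\phi \in T(G/G^{1},Z(G))$, define $T_{\phi}:G\to G$ by $T_{\phi}(g)=\phi(g+G^{1})$, viewed as an element of $Z(G)\subseteq G$. The essential verification is that $T_{\phi}$ is genuinely a central derivation: because $[g_{1},g_{2}]\in G^{1}$ we have $T_{\phi}([g_{1},g_{2}])=\phi([g_{1},g_{2}]+G^{1})=0$, while $[T_{\phi}(g_{1}),g_{2}]+(-1)^{deg(T_{\phi})deg(g_{1})}[g_{1},T_{\phi}(g_{2})]=0$ as both $T_{\phi}(g_{1})$ and $T_{\phi}(g_{2})$ lie in the center. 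Hence the derivation identity holds with both sides vanishing, and $T_{\phi}\in SDer_{z}(G)$. The assignments $T\mapsto\overline{T}$ and $\phi\mapsto T_{\phi}$ are manifestly mutually inverse and linear, whence $\Psi$ is an isomorphism of super vector spaces.

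For the Lie super isomorphism claim I would invoke Proposition 2.1: when $G$ is a stem Lie superalgebra, $SDer_{z}(G)$ is abelian, so its bracket vanishes identically. On the other side, the bracket on $T(G/G^{1},Z(G))$ is defined through the bracket of the abelian target $Z(G)$ and is likewise identically zero, exactly as computed for $Hom(G,H)$ in the preliminaries. Consequently $\Psi([T_{1},T_{2}])=0=[\Psi(T_{1}),\Psi(T_{2})]$ holds trivially, upgrading the super vector space isomorphism to a Lie super isomorphism.

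I expect the only genuinely delicate point to be the bookkeeping of the $\mathbb{Z}_{2}$-degrees: confirming that a central derivation of degree $\beta$ corresponds precisely to a degree-$\beta$ element of the $Hom$-space, and that $\Psi$ carries the even and odd components onto one another correctly. Every algebraic identity in the argument collapses to $0=0$ because the relevant images sit inside the center, so no genuine sign computation survives --- the content is entirely in the well-definedness and the degree matching.
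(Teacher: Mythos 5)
Your proposal is correct and follows essentially the same route as the paper: both construct the induced map $T\mapsto \overline{T}$ on $G/G^{1}$ (well-defined because central derivations kill $G^{1}$) and note that in the stem case both brackets vanish identically, so the bracket is trivially preserved. The only difference is that you explicitly build the inverse $\phi\mapsto T_{\phi}$ and verify it is a central derivation, whereas the paper dismisses surjectivity as "easily proved," and you route the vanishing of the bracket on $SDer_{z}(G)$ through Proposition 2.1 rather than recomputing it; both are harmless refinements of the same argument.
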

   
   \begin{proof}
   Let $T\in (Sder_{z}(G))_{\alpha}$ and define $T^{\prime}:G/G^{1}\rightarrow Z(G)$ by $T^{\prime}(g+G^{1})=T(g)~\forall~g\in G$. Clearly, $T^{\prime}$ is a linear transformation and $T^{\prime} \in (T(G/G^{1},Z(G))_{\alpha}$. Now, define $\pi:SDer_{z}(G) \rightarrow T(G/G^{1},Z(G))$ by $\pi(T)=T^{\prime}$. If $T_{1}(g)=T_{2}(g)\Leftrightarrow T^{\prime}_{1}(g+G^{1})= T^{\prime}_{2}(g+G^{1})\Leftrightarrow \pi(T_{1})=\pi(T_{2}),~\forall~g \in G$. Hence, $\pi$ is well-defined, injective and graded preserving linear transformation. The surjectivity of $\pi$ can be easily proved. If $Z(G)\subseteq G^{1}$, $T_{1}\in (SDer_{z}(G))_{\alpha}$ and $T_{2}\in (SDer_{z}(G))_{\beta}$, we have 
   \[\pi([T_{1},T_{2}])(g_{\gamma}+G^{1})= [T_{1},T_{2}]^{\prime}(g_{\gamma}+G^{1})=[T_{1},T_{2}](g_{\gamma})= T_{1}\circ T_{2}(g_{\gamma})-(-1)^{deg(T_{1})deg(T_{2})}T_{2}\circ T_{1}(g_{\gamma})=0.\]
   On the other hand,
   \[[\pi(T_{1}),\pi(T_{2})](g_{\gamma}+G^{1})=[T_{1}^{\prime},T_{2}^{\prime}](g_{\gamma}+G^{1})=[T_{1}^{\prime}(g_{\gamma}+G^{1}),T_{2}^{\prime}(g_{\gamma}+G^{1})]=0.\]
   Hence, $\pi$ is an Lie super isomorphism.
   \end{proof}
   
   \begin{lemma}
   For any arbitrary Lie superalgebra $H$, the Lie superalgebra $SDer_{z}(H)$ has a central subalgebra $N$ isomorphic to $T(G/G^{1},Z(G))$ for some stem Lie superalgebra $G$ isoclinic to $H$. 
   \end{lemma}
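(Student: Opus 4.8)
The plan is to exhibit $N$ as the isomorphic image of $SDer_{z}(G)$ inside $SDer_{z}(H)$ under the monomorphism $\pi$ of Proposition 2.4, and then to verify separately that this image is central. First I would invoke the Lie superalgebra analogue of Moneyhun's theorem, established by Saudamini [7]: every isoclinism class contains a stem Lie superalgebra, so the class of $H$ contains a stem Lie superalgebra $G$, and there is an isoclinism $(\varphi,\theta)$ between $G$ and $H$.

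Next, applying Proposition 2.4 to this isoclinism, I obtain the monomorphism $\pi:SDer_{z}(G)\rightarrow SDer_{z}(H)$, $T\mapsto T^{*}$, and set $N:=\pi(SDer_{z}(G))$. Since $\pi$ is an injective homomorphism of Lie superalgebras, $N$ is a subalgebra of $SDer_{z}(H)$ and $N\cong SDer_{z}(G)$. Because $G$ is stem, Proposition 2.5 furnishes a Lie super isomorphism $SDer_{z}(G)\cong T(G/G^{1},Z(G))$; composing the two gives $N\cong T(G/G^{1},Z(G))$, the required identification.

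It remains to prove that $N$ lies in the center of $SDer_{z}(H)$, and this is the step I expect to be the main obstacle. I would first record two properties of each generator $T^{*}\in N$, read off from the defining rule $T^{*}(h)=\theta(T(g))$ with $\varphi(g+Z(G))=h+Z(H)$. Since $T$ is a central derivation and $G$ is stem, $T(g)\in Z(G)\subseteq G^{1}$, so Lemma 2.3 gives $T^{*}(h)=\theta(T(g))\in H^{1}\cap Z(H)$; in particular the image of $T^{*}$ is contained in $H^{1}$. Moreover, if $h\in Z(H)$ then any preimage satisfies $g\in Z(G)\subseteq G^{1}$, and the Observation forces $T(g)=0$, so $T^{*}(h)=0$; that is, $T^{*}$ vanishes on $Z(H)$.

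With these two facts, centrality follows by a direct bracket computation. For arbitrary $S\in SDer_{z}(H)$ and $T^{*}\in N$ we have $[S,T^{*}](h)=S(T^{*}(h))-(-1)^{deg(S)deg(T^{*})}T^{*}(S(h))$ for all $h\in H$. Here $T^{*}(h)\in H^{1}$ is annihilated by the central derivation $S$ by the Observation, while $S(h)\in Z(H)$ is annihilated by $T^{*}$ by the second property above; hence both terms vanish and $[S,T^{*}]=0$. Therefore $N\subseteq Z(SDer_{z}(H))$, so $N$ is a central subalgebra of $SDer_{z}(H)$ isomorphic to $T(G/G^{1},Z(G))$ for the stem Lie superalgebra $G$ isoclinic to $H$, as desired.
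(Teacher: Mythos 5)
Your proof is correct and follows essentially the same route as the paper: take a stem Lie superalgebra $G$ in the isoclinism class of $H$ via [7], embed $SDer_{z}(G)$ into $SDer_{z}(H)$ by the monomorphism of Proposition 2.4, identify the image with $T(G/G^{1},Z(G))$ via Proposition 2.5, and verify centrality by the same bracket computation. If anything, you are more careful than the paper, since you explicitly justify the step that $T^{*}$ vanishes on $Z(H)$ (preimages of central elements lie in $Z(G)\subseteq G^{1}$, which central derivations annihilate), a fact the paper's computation uses only implicitly.
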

 \begin{proof}
 By [7, Lemma 4.2] there exist a stem Lie superalgebra $G$ isoclinic to $H$. According to the proposition 2.4, we have the embedding $\pi:SDer_{z}(G)\rightarrow SDer_{z}(H)$ defined by $\pi(T)=T^{*}$. Let $h_{\alpha} \in H_{\alpha}$, then $\exists$ $g_{\alpha} \in G_{\alpha}$ such that $\varphi(g_{\alpha}+Z(G))=h_{\alpha}+Z(H)$. Let $T^{\prime} \in (SDer_{z}(H))_{\beta}$ and  $T \in (SDer_{z}(G))_{\gamma}$. As $\theta(T(g_{\alpha}))\in (H^{1})_{\alpha+\gamma}$, so $T^{\prime}(\theta(T(g_{\alpha})))=0$. Since $T^{\prime}(h_{\alpha}) \in (Z(H))_{\alpha+\beta}$ and $T^{*}(h_{\alpha})=\theta(T(g_{\alpha}))$, we have 
 \[[T^{\prime},T^{*}](h_{\alpha})=T^{\prime}(T^{*}(h_{\alpha}))-(-1)^{deg(T^{\prime})deg(T^{*})}T^{*}(T^{\prime}(h_{\alpha}))=0\]
 Hence $N=\{T^{*}|T \in SDer_{z}(G)\}\cong SDer_{z}(G)$. Also, by proposition 2.5, we have $SDer_{z}(G)\cong T(G/G^{1},Z(G))$, as required.

\end{proof}   

\begin{lemma}
 Let $G$ and $H$ be two isoclinic Lie superalgebras. If $G$ is nilpotent with nilindex n, then so is $H$.
\end{lemma}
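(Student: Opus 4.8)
The plan is to show that the isomorphism $\theta\colon G^{1}\to H^{1}$ supplied by the isoclinism carries the entire lower central series of $G$ onto that of $H$, namely that $\theta(G^{i})=H^{i}$ for every $i\geq 1$. Once this is established, the statement about the nilindex drops out immediately: if $G^{n}=0$ then $H^{n}=\theta(G^{n})=0$, and since $\theta$ is injective and $G^{n-1}\neq 0$ (with $G^{n-1}\subseteq G^{1}$ when $n\geq 2$) we get $H^{n-1}=\theta(G^{n-1})\neq 0$, so $H$ is nilpotent of nilindex $n$ as well; the abelian case $n=1$ being trivial.

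First I would extract from the defining diagram the compatibility between $\varphi$ and $\theta$. Recall that $\mu$ and $\rho$ act by $(\overline{x},\overline{y})\mapsto[x,y]$ and are well defined precisely because $Z(G)$ and $Z(H)$ are the centres, so that replacing a representative by a central element does not change the bracket. Commutativity $\theta\circ\mu=\rho\circ\varphi^{2}$ then reads $\theta([x,y])=[h,k]$ whenever $\varphi(\overline{x})=\overline{h}$ and $\varphi(\overline{y})=\overline{k}$. Using that $\varphi$ is a Lie superalgebra isomorphism of $G/Z(G)$ onto $H/Z(H)$, one gets $\varphi(\overline{w})=\overline{\theta(w)}$ for every $w\in G^{1}$ (check it on generators $w=[x,y]$ and extend linearly). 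Substituting this back into the diagram yields the key formula
\[
\theta([w,z])=[\theta(w),z']\qquad(w\in G^{1},\ z\in G),
\]
where $z'\in H$ is any representative of $\varphi(\overline{z})$; the right-hand side is independent of the choice of $z'$, since two representatives differ by a central element of $H$, which brackets trivially.

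With this formula in hand the induction on $i$ is routine. The base case $\theta(G^{1})=H^{1}$ is just surjectivity of $\theta$. For the step, assume $\theta(G^{i})=H^{i}$. Any generator $\theta([w,z])=[\theta(w),z']$ of $\theta(G^{i+1})$ has $w\in G^{i}$, hence $\theta(w)\in H^{i}$ by the inductive hypothesis, so $[\theta(w),z']\in[H^{i},H]=H^{i+1}$; this gives $\theta(G^{i+1})\subseteq H^{i+1}$. Conversely, given a generator $[u,v]$ of $H^{i+1}$ with $u\in H^{i}$ and $v\in H$, choose $w\in G^{i}$ with $\theta(w)=u$ (inductive hypothesis) and $z\in G$ with $\varphi(\overline{z})=\overline{v}$ (surjectivity of $\varphi$); applying the key formula with the representative $v$ gives $[u,v]=[\theta(w),v]=\theta([w,z])\in\theta(G^{i+1})$. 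Hence $\theta(G^{i+1})=H^{i+1}$, completing the induction.

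The step that requires genuine care — and the main obstacle — is the passage between the two algebras in the bracket computation: $\theta$ is defined only on $G^{1}$, whereas $G^{i}=[G^{i-1},G]$ is built from brackets with arbitrary elements of all of $G$. The displayed formula is exactly what routes each such bracket through $\varphi$, and its validity hinges on two facts that I would verify explicitly: the compatibility $\varphi(\overline{w})=\overline{\theta(w)}$ on $G^{1}$, and the fact that the ambiguity in the coset representative $z'$ lies in $Z(H)$ and therefore disappears under the bracket. Everything respects the $\mathbb{Z}_{2}$-grading because $\varphi$ and $\theta$ are even and the lower central terms are graded ideals, so no separate bookkeeping of degrees is needed beyond noting that $G^{i}$ and $H^{i}$ are graded subspaces.
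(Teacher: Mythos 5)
Your proof is correct, and it takes a genuinely different route from the paper's. The paper argues via iterated adjoint operators: it rewrites $G^{n}=0$ as the Engel-type condition $(\mathrm{ad}_{g_{1}})^{n}(g_{2})=0$ for all $g_{1},g_{2}$, transports this to $H$ through the asserted identity $\theta\bigl((\mathrm{ad}_{g_{1}})^{n}(g_{2})\bigr)=(\mathrm{ad}_{h_{1}})^{n}(h_{2})$, and concludes. Your argument instead proves the stronger structural statement $\theta(G^{i})=H^{i}$ for all $i\geq 1$ by induction, resting on the compatibility $\varphi(\overline{w})=\overline{\theta(w)}$ for $w\in G^{1}$ (which is in effect the computation the paper carries out in its Lemma 2.3) and on the resulting formula $\theta([w,z])=[\theta(w),z']$. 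This buys you two things the paper's version lacks. First, $G^{n}=0$ is \emph{not} literally equivalent to the condition $(\mathrm{ad}_{g_{1}})^{n}(g_{2})=0$ for all $g_{1},g_{2}$: the term $G^{n}=[G^{n-1},G]$ is spanned by iterated brackets with \emph{varying} entries, so the paper's ``i.e.''\ conflates nilpotency of class $n$ with an $n$-Engel condition, and the converse implication needed at the end of its proof is not justified; your identification of the full lower central series sidesteps this entirely. Second, the paper's key step $\theta\bigl((\mathrm{ad}_{g_{1}})^{n}(g_{2})\bigr)=(\mathrm{ad}_{h_{1}})^{n}(h_{2})$ is asserted without proof, and what it actually requires is precisely your displayed formula routing a bracket of an element of $G^{1}$ with an arbitrary element of $G$ through $\varphi$, together with the observation that the ambiguity in the representative lies in $Z(H)$. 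In short, your proof is both a different decomposition of the problem and a repair of the paper's argument; the only cost is the small extra lemma $\varphi(\overline{w})=\overline{\theta(w)}$, which you verify correctly on generators.
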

\begin{proof}
Let $G$ be nilpotent Lie superalgebra with nilindex $n$. Then $G^{n}=[G^{n-1},G]=0$, i.e for all $g_{1} \in G_{\alpha},~g_{2} \in G_{\beta}$, we have $(ad_{g_{1}})^{n}(g_{2})=0$. We will show, $(ad_{h_{1}})^{n}(h_{2})=0$ for all $h_{1} \in H_{\alpha},~h_{2} \in H_{\beta}$. For  $h_{1} \in H_{\alpha},~h_{2} \in H_{\beta}$, $\exists~g_{1} \in G_{\alpha},~g_{2} \in G_{\beta}$ such that $\varphi(g_{1}+Z(G))=h_{1}+Z(H)$ and $\varphi(g_{2}+Z(G))=h_{2}+Z(H)$. since $G\sim H$, we have $\theta[g_{1},g_{2}]=[h_{1},h_{2}]$. As a consequence, $\theta((ad_{g_{1}})^{n}(g_{2}))=(ad_{h_{1}})^{n}(h_{2})=0$. Hence $H$ is nilpotent with nilindex n.
\end{proof}
\begin{proposition}
 Let $H$ be a nilpotent Lie superalgebras with nilindex 2. Then $SDer_{z}(H)$ has a central subalgebra isomorphic to $T(H/Z(H),H^{1})$ containing $SIDer_{z}(H)$ as a super vector subspace.
\end{proposition}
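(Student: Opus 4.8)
The plan is to combine the special structure forced by nilindex $2$ with Lemma 2.6 and Lemma 2.7. The first step is the structural reduction. If $H$ has nilindex $2$ then $H^{2}=[H^{1},H]=0$, so $H^{1}\subseteq Z(H)$ and in particular $H^{1}$ is abelian. A crucial consequence is that every inner derivation is automatically central, since $ad_{h}(H)=[h,H]\subseteq H^{1}\subseteq Z(H)$; hence $SIDer_{z}(H)=SIDer(H)=\{ad_{h}\mid h\in H\}$, which is the object I must eventually locate inside the central subalgebra.

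Next I would produce the central subalgebra and identify it. By Lemma 2.6, $SDer_{z}(H)$ contains a central subalgebra $N$ with $N\cong T(G/G^{1},Z(G))$ for some stem Lie superalgebra $G$ isoclinic to $H$. By Lemma 2.7 the nilindex is an isoclinism invariant, so $G$ also has nilindex $2$, giving $G^{1}\subseteq Z(G)$; and since $G$ is a stem algebra we already have $Z(G)\subseteq G^{1}$, so $Z(G)=G^{1}$. Substituting yields $T(G/G^{1},Z(G))=T(G/Z(G),G^{1})$. The isoclinism pair $(\varphi,\theta)$, with $\varphi:G/Z(G)\to H/Z(H)$ and $\theta:G^{1}\to H^{1}$ isomorphisms, then induces $f\mapsto \theta\circ f\circ\varphi^{-1}$, a bijection $T(G/Z(G),G^{1})\to T(H/Z(H),H^{1})$; because $H^{1}$ is abelian both $T$-spaces carry the trivial bracket, so this bijection is a Lie super isomorphism. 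Chaining these identifications gives $N\cong T(H/Z(H),H^{1})$, which settles the first assertion.

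The remaining step, which I expect to be the main obstacle, is to show $SIDer_{z}(H)\subseteq N$. Fix $h_{0}\in H_{\alpha}$ and, using that $\varphi$ is even, choose $g_{0}\in G_{\alpha}$ with $\varphi(g_{0}+Z(G))=h_{0}+Z(H)$. Since $G$ has nilindex $2$, $ad_{g_{0}}\in SDer_{z}(G)$, so by Proposition 2.4 it induces $(ad_{g_{0}})^{*}\in N$. I would then verify $(ad_{g_{0}})^{*}=ad_{h_{0}}$ from the commutativity of the isoclinism square: for $h\in H$ with $\varphi(g+Z(G))=h+Z(H)$,
\begin{align*}
(ad_{g_{0}})^{*}(h) &= \theta([g_{0},g]) = \theta\circ\mu(g_{0}+Z(G),\,g+Z(G)) \\
&= \rho\circ\varphi^{2}(g_{0}+Z(G),\,g+Z(G)) = [h_{0},h] = ad_{h_{0}}(h).
\end{align*}
As every element of $SIDer_{z}(H)=SIDer(H)$ has the form $ad_{h_{0}}$, this exhibits $SIDer_{z}(H)$ as a graded subspace of $N$. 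The delicate points here are the parity-compatible choice of the representative $g_{0}$ and the independence of $(ad_{g_{0}})^{*}$ from the choices of $g_{0}$ and $g$, which is exactly where the well-definedness from Proposition 2.4 and the commutativity of the isoclinism square carry the argument.

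As a self-contained alternative that avoids passing through $G$, I would instead define $\Phi:T(H/Z(H),H^{1})\to SDer_{z}(H)$ directly by $\Phi(f)(h)=f(h+Z(H))$. Both sides of the derivation identity vanish because $\Phi(f)(H)\subseteq H^{1}\subseteq Z(H)$ and $[H,H]\subseteq Z(H)$, so each $\Phi(f)$ is a central derivation; $\Phi$ is an injective graded map, and for $T\in SDer_{z}(H)$ one checks $[\Phi(f),T]=0$ using that $T(h)\in Z(H)$ kills $f$ and that central derivations annihilate $H^{1}$ by the Observation, so the image is central in $SDer_{z}(H)$. Finally $ad_{h_{0}}=\Phi(f_{h_{0}})$ with $f_{h_{0}}(h+Z(H))=[h_{0},h]$ (well defined since $[h_{0},Z(H)]=0$), delivering both assertions simultaneously.
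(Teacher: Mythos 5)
Your proposal is correct, and your primary route is essentially the paper's: both invoke Lemma 2.6 together with Lemma 2.7 (and the existence of a stem representative in the isoclinism class) to obtain the central subalgebra $N\cong T(G/G^{1},Z(G))$, and both use $Z(G)=G^{1}\cong H^{1}$ and $G/Z(G)\cong H/Z(H)$ to identify this with $T(H/Z(H),H^{1})$. Where you genuinely diverge, and improve matters, is the containment $SIDer_{z}(H)\subseteq N$. The paper defines $\sigma(h_{1})=T_{h_{1}}^{\prime}$ with $T_{h_{1}}^{\prime}(h_{2}+Z(H))=ad_{h_{1}}(h_{2})$ and concludes $SIDer(H)=\sigma(H)\subseteq T(H/Z(H),H^{1})$; this places the inner derivations inside the abstract space $T(H/Z(H),H^{1})$, but not yet inside $N$ as a subspace of $SDer_{z}(H)$ --- one must still check that the isomorphism $N\cong T(H/Z(H),H^{1})$ matches $\sigma(H)$ with the actual inner derivations of $H$. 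Your computation $(ad_{g_{0}})^{*}=ad_{h_{0}}$ via the commutative isoclinism square supplies exactly this verification, so your version of the argument is the more complete one. Your self-contained alternative $\Phi(f)(h)=f(h+Z(H))$ is a genuinely different and arguably cleaner proof: it bypasses the stem representative $G$ entirely, exhibits the central subalgebra directly inside $SDer_{z}(H)$, and realizes $ad_{h_{0}}=\Phi(f_{h_{0}})$ on the nose; the only cost is that it does not exercise the isoclinism machinery of Lemma 2.6, which the paper reuses in the proof of Theorem 2.10.
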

\begin{proof}
By Lemma 2.7 and [7, Lemma 4.2], we have a stem Lie superalgebra $G$ isoclinic to $H$ with nilindex 2. Clearly $Z(G)=G^{1}\cong H^{1}$. Also, $H/Z(H)\cong G/Z(G)\cong G/G^{1}$. Thus $T(G/G^{1},Z(G))\cong T(H/Z(H),H^{1})$ and from corollary2.6 $N\cong T(H/Z(H),H^{1})$.\\
Also, for $h_{1} \in H_{\alpha}$, the map $T_{h_{1}}^{\prime}:H/Z(H)\rightarrow H^{1}$ defined by $T_{h_{1}}^{\prime}(h_{2}+Z(H))=ad_{h_{1}}(h_{2})$ for all $h_{2} \in H_{\beta}$ is a linear transformation. It is easy to see that the map $\sigma: H\rightarrow T(H/Z(H),H^{1})$ defined by $\sigma(h_{1})=T_{h_{1}}^{\prime}$ is a linear transformation with $ker(\sigma)=Z(H)$. Hence $H/Z(H)\cong \sigma(H)=SIDer(H)\subseteq T(H/Z(H),H^{1})$.
\end{proof}
 Let $H$ be a Lie superalgebra such that $H^{1}$ is abelian and denote $\kappa(H)$ to be the intersection of all $Ker(f)$, where $f$ ranges over all Lie homomorphisms from $H$ to $H^{1}$.
\begin{lemma}
Let $H$ be a Lie superalgebra with nilindex 2. Then $H^{1}=\kappa(H)$.
\end{lemma}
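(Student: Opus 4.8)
The plan is to prove the two inclusions $H^{1}\subseteq \kappa(H)$ and $\kappa(H)\subseteq H^{1}$ separately, using throughout the fact that nilindex $2$ forces $H^{2}=[H^{1},H]=0$, so that $H^{1}\subseteq Z(H)$ and in particular $H^{1}$ is abelian (which is also precisely what makes the definition of $\kappa(H)$ applicable). The governing remark is that, because the target $H^{1}$ is abelian, a graded linear map $f\colon H\to H^{1}$ is a Lie homomorphism if and only if it kills the derived algebra: for any $a,b\in H$ one has $[f(a),f(b)]_{H^{1}}=0$, so the homomorphism condition $f([a,b])=[f(a),f(b)]_{H^{1}}$ collapses to $f([a,b])=0$, i.e. $f|_{H^{1}}=0$. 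Thus the Lie homomorphisms $H\to H^{1}$ are exactly the graded linear maps that vanish on $H^{1}$.

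For the first inclusion I would argue directly: by the remark above every Lie homomorphism $f\colon H\to H^{1}$ satisfies $f(H^{1})=f([H,H])=0$, so $H^{1}\subseteq \mathrm{Ker}(f)$ for each such $f$; intersecting over all $f$ gives $H^{1}\subseteq\kappa(H)$.

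For the reverse inclusion I would establish the contrapositive by a point-separation argument. Since $H$ has nilindex $2$ we have $H^{1}\neq 0$ by definition of the nilindex; fix a nonzero homogeneous element $w\in H^{1}$. Given a homogeneous $x\in H\setminus H^{1}$, its class is nonzero in the graded quotient $H/H^{1}$, so I may choose a graded complement $V$ with $H=H^{1}\oplus V$ and $x$ a member of a homogeneous basis of $V$. Define $f\colon H\to H^{1}$ to be zero on $H^{1}$ and on all the chosen basis vectors of $V$ except $x$, and set $f(x)=w$. This $f$ is a graded linear map vanishing on $H^{1}$, hence a Lie homomorphism by the remark, and $f(x)=w\neq 0$. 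Therefore $x\notin\mathrm{Ker}(f)$, so $x\notin\kappa(H)$; equivalently $\kappa(H)\subseteq H^{1}$, and the two inclusions give $H^{1}=\kappa(H)$.

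The routine part is the bookkeeping; the one point that genuinely needs care is the grading in the separation step. If $x$ lies in the odd part while $H^{1}$ meets only the even part (as happens already for the super-Heisenberg algebra with $[e,e]=z$), then no \emph{even} homomorphism can detect $x$, so one must allow $f$ to be a graded homomorphism of either parity, choosing $w\in H^{1}$ of whatever degree is available and letting $f$ carry the resulting degree $\deg(w)+\deg(x)$. This is the only place where the super setting departs from the classical Lie-algebra argument, and it is why the homomorphisms in the definition of $\kappa(H)$ must be understood as all graded homomorphisms rather than merely the even ones.
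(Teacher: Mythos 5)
Your proof is correct and follows essentially the same two-inclusion strategy as the paper: the containment $H^{1}\subseteq\kappa(H)$ comes from $H^{2}=[H^{1},H]=0$ forcing every homomorphism into the abelian target $H^{1}$ to kill brackets, and the reverse containment comes from separating a homogeneous $x\notin H^{1}$ by a rank-one map defined on a graded complement containing $H^{1}$. Two points of comparison are worth recording. First, you bypass the paper's detour through maximal subalgebras and the citation of [6, Theorem 3.9]: since your $f$ is defined to vanish on $H^{1}$ and the target is abelian, the homomorphism condition is immediate, and no ideal property of the complement is ever needed; this is a genuine simplification. Second, and more substantively, your closing remark about parity identifies a real gap in the paper's argument. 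The paper defines a Lie superalgebra homomorphism to be homogeneous of \emph{even} degree, yet its separating map $f(l+\lambda h)=\lambda z$ has degree $\deg(z)+\deg(h)$, which is odd precisely in the situation you flag: for the super-Heisenberg algebra with $[e,e]=z$ one has $H^{1}=\langle z\rangle$ purely even and $e\notin H^{1}$ odd, so every even homomorphism $H\to H^{1}$ vanishes identically and $\kappa(H)=H\neq H^{1}$. Thus the lemma is actually false under the paper's own reading of ``Lie homomorphism'' and requires $\kappa(H)$ to be taken over graded homomorphisms of both parities, exactly as you stipulate. Your version makes explicit a convention the paper uses silently, and is the more careful treatment; the only cosmetic omission is that you could note $\kappa(H)$ is a graded subspace, so checking homogeneous $x$ suffices for the inclusion $\kappa(H)\subseteq H^{1}$.
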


\begin{proof}
Let $h_{1},h_{2} \in H$ and $f:H\rightarrow H^{1}$ be a Lie homomorphism. Since $ H^{2}=[ H^{1},H]=0$, we have  $f([h_{1},h_{2}])=[f(h_{1}),f(h_{2})]=0$. Therefore $H^{1}\subseteq\kappa(H)$. Now suppose there exist  $h \in\kappa(H)$ such that $h \notin H^{1}$. Then we can write $H=L \oplus <h>$. Clearly $L$ is a maximal subalgebra of $H$ and hence by [6,Theorem 3.9], $L$ is an ideal. For a fixed $z \in H^{1}$,  define $f:H\rightarrow H^{1}$ by $f(l+\lambda h):=\lambda z$ for every $l \in L$ and $\lambda$ is a scalar. It is easy to show $f$ is a well-defined Lie superalgebra homomorphism with $ker(f)=L$. Hence we get a contradiction. Thus $H^{1}\supseteq \kappa(H)$ and $H^{1}=\kappa(H)$.

\end{proof}
\begin{theorem}
Let $H$ be a nilpotent Lie superalgebra with nilindex 2. Then $Z(SDer_{z}(H)) \cong T(H/Z(H),H^{1}).$
\end{theorem}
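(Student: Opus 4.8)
The plan is to make $SDer_{z}(H)$ fully explicit and then read off its centre. By Proposition 2.5 the assignment $T\mapsto\overline{T}$, with $\overline{T}(h+H^{1})=T(h)$, is a super vector space isomorphism $SDer_{z}(H)\cong T(H/H^{1},Z(H))=Hom(H/H^{1},Z(H))$, valid for every $H$. The first task is to transport the Lie super bracket across this isomorphism. Since a central derivation annihilates $H^{1}$ and has image in $Z(H)$, a direct check gives $\overline{T_{1}\circ T_{2}}=\overline{T_{1}}\circ\pi\circ\overline{T_{2}}$, where $\pi\colon Z(H)\to H/H^{1}$ is the restriction of the canonical projection; hence the transported bracket is $[f_{1},f_{2}]=f_{1}\circ\pi\circ f_{2}-(-1)^{\deg(f_{1})\deg(f_{2})}\,f_{2}\circ\pi\circ f_{1}$. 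Because $H$ has nilindex $2$ we have both $H^{1}\subseteq Z(H)$ and $H^{1}\neq 0$, and these two facts are exactly what the argument will exploit. Computing $Z(SDer_{z}(H))$ is thereby reduced to computing the centre of $Hom(H/H^{1},Z(H))$ for this bracket.

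Next I would choose graded complements $Z(H)=H^{1}\oplus C$ and $H=Z(H)\oplus D$, so that $H/H^{1}\cong C\oplus D$ with $C\cong Z(H)/H^{1}$ and $D\cong H/Z(H)$, and write each $f$ as a block map $C\oplus D\to H^{1}\oplus C$ with homogeneous components $a\colon C\to H^{1}$, $b\colon D\to H^{1}$, $c\colon C\to C$, $d\colon D\to C$. In this language $T(H/Z(H),H^{1})=Hom(H/Z(H),H^{1})$ is precisely the set of $f$ with $a=c=d=0$ (only $b$ nonzero). Since $\pi$ kills $H^{1}$ and is the identity on $C$, for such an $f$ one has $\pi\circ f=0$ and $f\circ\pi=0$, so $f\circ\pi\circ f'=0=f'\circ\pi\circ f$ for every $f'$ and $f$ is central; this re-proves the inclusion $T(H/Z(H),H^{1})\subseteq Z(SDer_{z}(H))$ already supplied by Proposition 2.8.

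The real content is the reverse inclusion, for which I would expand the product in blocks: a short computation shows that $f\circ\pi\circ f'$ has block form $\begin{pmatrix} a c' & a d'\\ c c' & c d'\end{pmatrix}$, depending only on the $a,c$ of $f$ and the $c',d'$ of $f'$. Testing centrality against $f'$ with $c'=d'=0$ makes $f\circ\pi\circ f'$ vanish identically, so centrality forces $f'\circ\pi\circ f=\begin{pmatrix} a'c & a'd\\ 0 & 0\end{pmatrix}=0$ for every $a'\in Hom(C,H^{1})$; here the hypothesis $H^{1}\neq 0$ guarantees enough maps $a'$ to conclude $c=0$ and $d=0$. Testing next against $f'$ with $a'=b'=d'=0$ and $c'=\mathrm{id}_{C}$ makes $f'\circ\pi\circ f$ vanish, forcing $a=0$. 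Thus $a=c=d=0$, i.e. $f$ annihilates $Z(H)/H^{1}$ and lands in $H^{1}$, so $f\in Hom(H/Z(H),H^{1})$. I expect the main obstacle to be organising the bracket into this block form correctly; a pleasant surprise is that the Koszul signs $(-1)^{\deg(f)\deg(f')}$ never actually intervene, because in each test one of the two terms is identically zero, so the even and odd cases are handled at once. Combining the two inclusions gives $Z(SDer_{z}(H))=Hom(H/Z(H),H^{1})=T(H/Z(H),H^{1})$, as required.
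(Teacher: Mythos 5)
Your proof is correct, but it takes a genuinely different route from the paper's. The paper proves the reverse inclusion $Z(SDer_{z}(H))\subseteq T(H/Z(H),H^{1})$ by passing through the isoclinism machinery: it invokes a stem Lie superalgebra $G$ isoclinic to $H$ (Lemma 2.7 together with [7, Lemma 4.2]), identifies $T(H/Z(H),H^{1})$ with the central subalgebra $N=\{T^{*}\mid T\in SDer_{z}(G)\}$ of Proposition 2.8, and then shows any $T\in Z(SDer_{z}(H))$ lies in $N$ by testing it against the derivations $\tau_{f}$ (for Lie homomorphisms $f\colon H\to H^{1}$, which forces $\mathrm{Im}(T)\subseteq\kappa(H)=H^{1}$ via Lemma 2.9, itself resting on the maximal-subalgebras-are-ideals theorem of [6]) and against the maps $\sigma_{\chi}$ (which forces $T(Z(H))=0$). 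You instead stay entirely inside $SDer_{z}(H)$, transport the bracket to $Hom(H/H^{1},Z(H))$ explicitly as $[f_{1},f_{2}]=f_{1}\pi f_{2}-(-1)^{\deg f_{1}\deg f_{2}}f_{2}\pi f_{1}$, and compute the centre by block linear algebra; your test maps with only $a',b'$ nonzero are precisely the paper's $\tau_{f}$, and your $c'=\mathrm{id}_{C}$ test plays the role of $\sigma_{\chi}$, but you construct the separating maps directly from $H^{1}\neq 0$ rather than quoting $\kappa(H)=H^{1}$. What your route buys is self-containedness: no stem algebra, no dependence on the unpublished reference [7], no Frattini-subalgebra input, and an explicit formula for the bracket on $SDer_{z}(H)$ that makes the answer transparent. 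What the paper's route buys is the structural statement that the centre consists exactly of the central derivations induced from the stem algebra $G$, which is the organizing theme of the paper and is reused in Lemma 2.11. One small point worth making explicit in your write-up: the surjectivity of $T\mapsto\overline{T}$ onto all of $Hom(H/H^{1},Z(H))$ (asserted but not proved in Proposition 2.5) is what licenses all of your test maps $f'$ to be genuine central derivations; it holds because any linear map killing $H^{1}$ with image in $Z(H)$ trivially satisfies the derivation identity, and you should say so since the whole argument rests on it.
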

\begin{proof}
 From corollary 2.8, $N\cong T(H/Z(H),H^{1})$, where $N=\{ T^{*}|T \in SDer_{z}(G)\}$ is a central subalgebra. To complete the prove of the theorem, we will show $Z(SDer_{z}(H))\subseteq N$. For each Lie homomorphism $f:H\rightarrow H^{1}$, we define $\tau_{f}:H\rightarrow H$ by $\tau_{f}(h_{\alpha}):=f(h_{\alpha})$, where $h_{\alpha} \in H_{\alpha}$. Since $Z(H)=H^{1}$, so $\tau_{f} \in SDer_{z}(H)$. Let $T \in Z(SDer_{z}(H))$, then $[T,\tau_{f}]=0$. Therefore, $\tau_{f}(T(h_{\alpha}))=(-1)^{deg(T)deg(\tau_{f})}T(\tau_{f}(h_{\alpha}))$. Since $\tau_{f}(h_{\alpha}) \in H^{1}$ and $T$ is a central derivation, we have $\tau_{f}(T(h_{\alpha}))=0=f(T(h_{\alpha}))$. Hence, $T(h_{\alpha}) \in H^{1}=\kappa(H)$. Let $z_{\alpha} \in Z(H)$, $L$ be a maximal subalgebra of $H$ containing $Z(H)$ and $x_{\beta} \notin L$. Then $L$ is an ideal of $H$ and there is a map $\chi:H\rightarrow Z(H)$ such that $Z(H)\subseteq kernel(\chi)$ and $\chi(h_{\alpha})=z_{\beta}$. Now we define $\sigma_{\chi}:H\rightarrow H$ by $\sigma_{\chi}(h_{\alpha}):=\chi(h_{\alpha})$.
 Since $T \in Z(SDer_{z}(H))$, we have $T(z_{\beta})=T(\sigma_{\chi}(h_{\alpha}))=(-1)^{deg(T)deg(\sigma_{\chi})}\sigma_{\chi}(T(h_{\alpha}))=0$. Hence $T(Z(H))=0$. The map $T_{1}:H\rightarrow H$ defined by $T_{1}(g_{\alpha})=\theta^{-1}(T(h_{\alpha}))$, where $\varphi(g_{\alpha}+Z(G))=h_{\alpha}+Z(H)$, is well defined. Then $T_{1} \in SDer_{z}(G) $ and $T^{*}_{1}=T$.
\end{proof}

\begin{lemma}
Let $H$ be a nilpotent Lie superalgebra with nilindex 2. Then $SDer_{z}(H)$ is abelian if and only if $H^{1}=Z(H)$.
\end{lemma}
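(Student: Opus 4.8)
The plan is to reduce this statement to Proposition 2.2, which already characterises abelianness of $SDer_{z}$ for non-abelian nilpotent Lie superalgebras. First I would record the consequences of the nilindex $2$ hypothesis. Since the nilindex is $2$ we have $H^{2}=[H^{1},H]=0$, which says precisely that $H^{1}\subseteq Z(H)$; moreover $H^{1}\neq 0$ (otherwise the nilindex would be $1$), so $H$ is non-abelian. With $H^{1}\subseteq Z(H)$ in hand, the equality $H^{1}=Z(H)$ is equivalent to the reverse inclusion $Z(H)\subseteq H^{1}$, which is exactly the definition of $H$ being a \emph{stem} Lie superalgebra.

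Given this reduction, both implications follow at once. By Proposition 2.2, for the non-abelian nilpotent Lie superalgebra $H$ the algebra $SDer_{z}(H)$ is abelian if and only if $H$ is a stem Lie superalgebra, i.e. $Z(H)\subseteq H^{1}$. Combined with the automatic inclusion $H^{1}\subseteq Z(H)$ coming from nilindex $2$, this is the same as $H^{1}=Z(H)$, which is the desired equivalence.

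If one prefers to argue directly rather than quoting Proposition 2.2, I would proceed as follows. For the direction $H^{1}=Z(H)\Rightarrow SDer_{z}(H)$ abelian, note that $H^{1}=Z(H)\subseteq H^{1}$ makes $H$ a stem algebra, so abelianness is immediate from Proposition 2.1. For the converse I would argue by contraposition: assuming $H^{1}\subsetneq Z(H)$, choose $y\in Z(H)$ with $y\notin H^{1}$ and $0\neq x\in H^{1}\subseteq Z(H)$, and define central derivations $T_{1},T_{2}$ by $T_{1}(y)=y$, $T_{2}(y)=x$, both vanishing on every other basis element, exactly as in the proof of Proposition 2.2. Since their images lie in $Z(H)$ one checks $T_{1},T_{2}\in SDer_{z}(H)$, and evaluating the bracket at $y$ gives $[T_{1},T_{2}](y)=T_{1}(x)-(-1)^{deg(T_{1})deg(T_{2})}T_{2}(y)\neq 0$, contradicting abelianness.

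The only points requiring care are the observation that nilindex $2$ forces both $H^{1}\subseteq Z(H)$ and $H^{1}\neq 0$ (so that $H$ is non-abelian and the hypotheses of Proposition 2.2 apply), and, in the direct argument, the verification that the constructed maps $T_{1},T_{2}$ are genuine central derivations together with the sign bookkeeping in $[T_{1},T_{2}](y)$. These are the same computations already carried out in Proposition 2.2, so the substantive content of the lemma is really the translation of the stem condition into the clean statement $H^{1}=Z(H)$ under the nilindex $2$ hypothesis.
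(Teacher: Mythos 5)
Your proof is correct, but it takes a genuinely different route from the paper's. You note that nilindex $2$ forces $H^{2}=[H^{1},H]=0$, hence $H^{1}\subseteq Z(H)$ and $H^{1}\neq 0$, so $H$ is a non-abelian nilpotent Lie superalgebra and the condition $H^{1}=Z(H)$ collapses to the stem condition $Z(H)\subseteq H^{1}$; the lemma then becomes a literal special case of Proposition 2.2. The paper only uses Proposition 2.1 for the easy direction and handles the converse quite differently: it combines Proposition 2.5 (giving $SDer_{z}(H)\cong T(H/H^{1},Z(H))$) with Theorem 2.10 (giving $Z(SDer_{z}(H))\cong T(H/Z(H),H^{1})$), so that abelianness of $SDer_{z}(H)$ identifies these two spaces; writing $Z(H)=K\oplus H^{1}$ and decomposing $T(H/H^{1},Z(H))=T(H/H^{1},H^{1})\oplus T(H/H^{1},K)$ then forces $K=0$. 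Your argument is shorter and more elementary --- it does not need the main theorem at all, only the explicit pair of non-commuting central derivations already constructed in Proposition 2.2 --- whereas the paper's version exhibits the lemma as a consequence of its structural isomorphisms for $SDer_{z}(H)$ and its center. The one point of care in your route is that Proposition 2.2 is stated for finite-dimensional algebras, a hypothesis the lemma does not repeat; either carry it along explicitly (as the paper implicitly does throughout) or fall back on your direct construction of $T_{1},T_{2}$, which works verbatim and closes that gap.
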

\begin{proof}
If $H^{1}=Z(H)$, then $H$ is a stem algebra and by proposition 2.1,  $SDer_{z}(H)$ is abelian Lie superalgebra. Now, suppose  $SDer_{z}(H)$ is abelian, then by proposition 2.5, $SDer_{z}(H)\cong T(H/H^{1},Z(H))$ and on the other hand , by theorem 3.10,   $SDer_{z}(H)=Z(SDer_{z}(H)) \cong T(H/Z(H),H^{1})$. Therefore, $ T(H/Z(H),H^{1})\cong T(H/H^{1},Z(H))$. Let $Z(H)=K\oplus H^{1}$ as a super vector spaces. Then
\[T(H/H^{1},Z(H))=T(H/H^{1},H^{1})\oplus T(H/H^{1},K).\]
Since $T(H/Z(H),H^{1})$ is isomorphic to a subalgebra of $T(H/H^{1},H^{1})$, we have $ T(H/H^{1},K)=0$. Therefore $K=0$ and $H^{1}=Z(H)$.
\end{proof}
{}
\end{document}